\DeclareMathOperator\Inf{\rm Inf}
\DeclareMathOperator\Sym{\rm Sym}
\DeclareMathOperator\SYT{\rm SYT}
\theoremstyle{definition}
\newtheorem{theorem}{Theorem}
\newtheorem*{theorem*}{}
\newtheorem{lemma}[theorem]{Lemma}
\newtheorem{definition}[theorem]{Definition}
\newtheorem{question}[theorem]{Question}
\newtheorem{corollary}[theorem]{Corollary}
\begin{document}
	
\title{ Some Properties of Generalized Foulkes Module} \author{P\'{a}l Heged\"{u}s \and Sai Praveen Madireddi }
\maketitle
\begin{abstract}
Describing the decomposition of Foulkes module $F_b^a$ into irreducible Specht modules is an open problem for $a,b > 3$. In this article we provide a new approach for the Generalized Foulkes module $F_{\nu}^a$ (with arbitrary partition $\nu$ of $b$) through its restriction to a maximal Young subgroup 
${S_b \times S_{ab -b}}$.
\end{abstract}
\section{Introduction}
The modules in this paper are defined over the complex numbers. For $a,b>1$ integers, let $n=ab$. The Foulkes module $F_{b}^{a}$ is the permutation module of $S_{n}$ acting on the set of partitions of type $(a^b)$, that is on partitions of $\{1, 2, \ldots, n\}$ into $b$ sets of size $a$ each. 


The simple $S_b$-modules are parametrised by the partitions of $b$, the simple module corresponding to a partition $\nu\vdash b$ is the so-called Specht module, $S^\nu$. In particular, $S^{(b)}$ is the trivial, while $S^{(1^b)}$ is the sign module of $S_b$. The wreath product $S_a\wr S_b\leq S_{n}$ has a normal subgroup $S_a\times S_a\times\cdots \times S_a$ ($b$ factors), called the base group, with factor group isomorphic to $S_b$, hence we may consider $S^\nu$ as an $S_a\wr S_b$-module with kernel containing the base group. This module is the inflated Specht module, denoted by $\Inf_{S_b}^{S_a \wr S_b} S^{\nu}$ is a simple modele of $S_a \wr S_b$. The $\nu$-generalized Foulkes module is the induced module of this inflation to $S_{n}$, in formula, $F_\nu^a = \Inf_{S_b}^{S_a \wr S_b} S^{\nu} \uparrow^{S_{n}}$. When $\nu = (b)$, we recover the original Foulkes module, $F_{(b)}^a=F_b^a$. If, however, $a=1$ then $S_a\wr S_b=S_{b}$ and $F_\nu^1=S^\nu$.

Thrall \cite{TRM} decomposed the Foulkes module into simple components for $a = 2$ and for $b = 2$:
\begin{equation*}
	F_b^2 = \bigoplus_{\lambda \vdash b} S^{2\lambda};\qquad
	F_2^a = \bigoplus_{\substack{\lambda \vdash 2a\\ \lambda_1+\lambda_2=2a }}  S^{\lambda}.
\end{equation*}

Foulkes \cite{HOF} conjectured that if $a \leq b$ then $F_b^a$ can be embedded in $F_a^b$, for $a=2$ this is an immediate consequence of Thrall's result.
Dent \cite{SCD} decomposed the Foulkes module for $F_3^a$ and $F_a^3$ into Specht modules and verified the conjecture for $a =3$. For $a>3$ no full decomposition of the Foulkes module is known, albeit the conjecture is proven for $a\leq 5$, see \cite{TM} \cite{MN} \cite{CIM}.
For an integer $0<k<n$ we define $\Omega_k$ as the set of partitions of $k$ which are subpartitions of $(a^b)$, that is all parts are of size less than or equal to $a$. Then the restriction of the generalized Foulkes module to $S_k\times S_{n-k}$ has a natural decomposition (see below, Definition~\ref{def:component}) indexed by $\Omega_k$:
\begin{equation}\label{eq:Omega-indexing}
		F_{\nu}^a \downarrow_{S_k \times S_{n-k} } = \bigoplus_{\lambda \in \Omega_k} V_{\nu, a}^{\lambda}
\end{equation}
We are concerned mainly with the $(1^k)$-component $U_{\nu,a}=V_{\nu,a}^{(1^k)}$.

The main theorem of this paper is the following. Let $\mu^{\perp}$ denote the conjugate of the partition $\mu$. In particular, $(1^b)^\perp=(b)$. For $\mu,\lambda,\nu\vdash b$ let $c_{\mu,\lambda}^\nu=c_{\lambda,\mu}^\nu$ denote the Kronecker coefficient, that is the multiplicity of $S^\nu$ in the tensor product $S_b$-module $S^\mu\otimes S^\lambda$. For $0<k<n$  any pair of $S_k$-module $M$ and $S_{n-k}$-module $N$ defines an $S_k\times S_{n-k}$-module $M\times N$. The simple modules of $S_k\times S_{n-k}$ are the ones  $S^\mu\times S^\lambda$ coming from pairs of Specht modules.
\begin{theorem}	\label{thm:main}
	Let $k=b$ and as above, $U_{\nu,a}=V_{\nu,a}^{(1^b)}$.
	 Then 
	
	\begin{equation}\label{eq:U_a}
	U_{\nu, a} \cong \bigoplus_{\mu,\lambda\vdash b} c_{\mu, \lambda}^{\nu} S^{\mu} \times F_{\lambda}^{a-1}.
	\end{equation}
	In particular, for $a=2$
	\begin{equation}\label{eq:U_2}
	U_{\nu, 2} \cong \bigoplus_{\mu,\lambda\vdash b} c_{\mu, \lambda}^{\nu} S^{\mu} \times S^{\lambda}.
	\end{equation}
	
\end{theorem}
As noted above, $S^{(b)}$ is the trivial, while $S^{(1^b)}$ is the sign module of $S_b$. So $c_{(b),\nu}^\nu=1$ and $c_{(1^b),\nu^\perp}^\nu=1$. Hence the multiplicity of both $S^{(b)} \times S^{\nu}$ and $S^{(1^b)} \times S^{\nu^{\perp}}$ in $U_{\nu, 2}$ are $1$.
Two important special cases of the main theorem are the following.
\begin{corollary}\label{cor:Foulkes}
Let $a,b \in \mathbb{N}$.
The $(1^b)$-summand of the Foulkes module $F_{(b)}^a$ restricted to $S_b\times S_{n-b}$ is
\begin{equation*}
U_{(b), a} \cong \bigoplus_{\lambda \vdash b} S^{\lambda} \times F_\lambda^{a-1}
\end{equation*}
In particular, for $a = 2$
\begin{equation*}
U_{(b), 2} \cong \bigoplus_{\lambda \vdash b} S^{\lambda} \times S^{\lambda}
\end{equation*}

\end{corollary}
\begin{corollary}\label{cor:gen_Foulkes}
Let $a,b \in \mathbb{N}$. The $(1^b)$-summand of the generalized Foulkes module $F_{(1^b)}^a$ restricted to $S_b\times S_{n-b}$ is
\begin{equation*}
U_{(1^{b}), a} \cong \bigoplus_{\lambda \vdash b} S^{\lambda^\perp} \times F_\lambda^{a-1}.
\end{equation*}
In particular, for $a = 2$
\begin{equation*}
U_{(1^b), 2} \cong \bigoplus_{\lambda \vdash b} S^{\lambda^\perp} \times S^{\lambda}
\end{equation*}
\end{corollary}

With the help of the so-called semistandard homomorphism de Boeck\cite{DBM} proved that the multiplicity of $S^{\lambda +(b)}$ in $F_{b}^{a+1}$ is at least the multiplicity of $S^{\lambda}$ in $F_b^a$ and the multiplicity of  $S^{\lambda +(1^b)}$ in $F_{b}^{a+1}$ is equal to the multiplicity of $S^{\lambda}$ in $F_{(1^b)}^a$. This we establish as a corollary of our results. See Corollary \ref{cor:gen_foulkes_min}.
\section{Preliminaries}
Our main reference for the representations of the symmetric group is \cite{GJamesRTSG}, recall especially the notions of the $\nu$-tableau $t$, the tabloid $\{t\}$ and the polytabloid $e_t$. We call $\SYT(\nu)$ the set of standard $\nu$-tableaux. 

Let $a,b \geq 2$ fixed integers and $n=ab$. Denote by $H=H_b^a$ the set of ordered partitions of $\{1,\,2,\ldots, n\}$ into $b$ sets of size $a$ each. Clearly, $S_n$ acts on $H$ by permuting the letters.  But $S_b$ also acts on $H$ by permuting the indices, that is, for $X=(X_{1}, \ldots, X_{b})\in H_a^b$ we have $\sigma X = (X_{\sigma(1)}, \ldots, X_{\sigma(b)})$. Let $I=I_b^a$ be a set of representatives of $S_b$ orbits. Therefore $|H|=n!/(a!)^b$ and $|I|=n!/(a!)^b b!$.


\begin{definition}\label{def:t_X}
For a partition $\nu\vdash b$, a $\nu$-tableau $t$, 
and  $X\in H$ let $t_X$ be the $\nu$-shaped diagram with $X_l$ replacing $l$ in $t$. 
Similarly, $\{t_X\}$ is the ``tabloid'' where each $l$ in $\{t\}$ is replaced by $X_l$. If $g\in  S_n$ then clearly 
$gt_{X}=t_{gX}$ and $g\{t_X\}=\{t_{gX}\}$. Finally, a $\nu$-polytabloid, $e_t$ is a certain element of the vector space with the $\nu$-tabloids being a formal basis. The corresponding $\nu$-``polytabloid'' $e_{t_X}$, is an element of the vector space $V$ having the $\nu$-``tabloids'' as a formal basis. Here again, the parts of $X$ are replacing the letters. We again have that for $g\in S_n$, $ge_{t_X}=e_{t_{gX}}$.
\end{definition}

As an example let $\nu = (2,2)$. 
For
\begin{center}
     $t$ =     \begin{ytableau}   
               1 & 2 \\                         
               3& 4                            
	           \end{ytableau}\,,                      
           \qquad
           $t_{X}$ = \begin{ytableau}
  	                      X_1 & X_2\\
  	                      X_3 & X_4
                          \end{ytableau}\,,
            \qquad
            $\{t\} =\ytableausetup                     
            {boxsize = 1.4em,
            tabloids, mathmode}
            \ytableaushort{       
 	        12,34
            }$\,,
            \qquad
            $\{t_X\} =\ytableausetup                     
            {boxsize = 1.4em,
            tabloids, mathmode}
            \ytableaushort{       
 	        {X_1} {X_2}, {X_3} {X_4}
            }$\,.
\end{center}

Let $a=3,\,b=4$ and \[X = (\{1,2,3\},\{4,5,6\},\,\{7,8,9\},\,\{10, 11, 12\})\in H_3^4.\] Then

\[e_t = \ytableausetup                     
        {boxsize = normal, tabloids}           
        \ytableaushort{       
        12, 34
        }
        -
        \ytableausetup
        {boxsize = normal, tabloids}
        \ytableaushort{
        14, 32
        }
        +
        \ytableausetup
        {boxsize = normal, tabloids}
        \ytableaushort{
        34, 12
        }
        -
        \ytableausetup
        {boxsize = normal, tabloids}
        \ytableaushort{
        32, 14
        }\]
 and

 \begin{align*}
 e_{t_X} =&\ytableausetup                     
            {boxsize = 1.4em,
            tabloids, mathmode}
            \ytableaushort{       
 	        \{123\}\{456\}, \{789\}\{{10} {11} {12}\}
            }
            -
            \ytableaushort{
          	 \{123\}\{{10} {11} {12}\}, \{789\}\{456\}
            }\\ \\
           + &
            \ytableaushort{
            \{789\}\{{10} {11} {12}\}, \{123\}\{456\}
            }
            -
            \ytableaushort{
 	        \{789\}\{456\}, \{123\}\{{10} {11} {12}\}
            }
\end{align*}

The set of standard  $\nu$-polytabloids, $B_{\nu}$ = $\{ e_t \mid t\in\SYT(\nu)\}$ forms a basis of the Specht module $S^{\nu}$.

\begin{lemma}\label{lem:polytabloids}The inflation $\Inf_{S_b}^{S_a \wr S_b} S^{\nu}$ has basis  $B_{\nu,X} = \{ e_{t_X} \mid t\in\SYT(\nu)\}$ in the vector space $V$ of $\nu$-``polytabloids.''
\end{lemma}
Before the proof we remark that the role of $X$ in the definition is to fix the wreath product, which acts on the parts of $X$. 

\begin{proof}
For $g\in S_a\wr S_b$ the image in $S_b\cong S_a\wr S_b/S_a^b$ is denoted by $\tau_g$.
Now $\tau_g\in S_b$ acts on the entries of $t$ and on the indices of $X$ and we have $(\tau_g t)_X=t_{\tau_g X}$ and therefore $ge_{t_X}=e_{(\tau_g t)_X}=e_{t_{\tau_g X}}=e_{t_{gX}}$. The action of $S_b$ on $S^\nu$ with respect to the basis $B_\nu$ is indeed inflated to the action of $S_a\wr S_b$ with respect to the basis $B_{\nu,X}$. The simple module $\Inf_{S_b}^{S_a \wr S_b} S^{\nu}$ is generated by $e_{t_X}$ for any tableau $t$ like $S^{\nu}$ is generated by any $e_t$.
\end{proof}

\begin{lemma} \label{lem:arbpoly}
Fix a partition $\nu\vdash b$ and a $\nu$-tableau $t$. Let $V_0=\langle e_{t_X}\mid X\in H\rangle\leq V$. Then $V_0$ is $S_n$-invariant and as an $S_n$-module it is isomorphic to the generalized Foulkes module $F_{\nu}^ a = \Inf_{S_b}^{S_a \wr S_b} S^{\nu} \uparrow^{S_{n}}$. Further, $B_{\nu}^a = \{e_{s_Z} \mid Z \in I, s \in\SYT(\nu)\}$ is a basis of $V_0\cong F_{\nu}^a$ such that if $t$ is a $\nu$-tableau, $X\in H$ and \[e_{t_X} = \sum_{e_{s_Z}\in B_\nu^a} c_{s,Z} e_{s_{Z}}\]
then $c_{s,Z}=0$ unless $X$ is in the $S_b$-orbit of $Z$.
\end{lemma}
\begin{proof}As $ge_{t_X}=e_{t_{gX}}$, $V_0$ is $S_n$-invariant, so an $S_n$-module. Also, $V_0$ is generated by $e_{t_X}$ for any $X$. Let us fix a $W=S_a \wr S_b\leq S_n$ and $Y\in H$ the corresponding partition. As $e_{t_Y}$ generates $V_0$ as an $S_n$-module, by \cite[Corollary~$8.3$]{JLAlperin} it is enough to confirm that
   \[\dim V_0=|S_n:W||B_{\nu,Y}|=\frac{n!}{(a!)^b b!}|B_\nu|=|I||\SYT(\nu)|.\] 

	Pick $Z \in I$ in the $S_b$-orbit of $Y$. If $Z = \sigma Y$ for a $\sigma\in S_b$ then $e_{t_Y}=e_{t_{\sigma Z}}=e_{(\sigma t)_Z}
$. If
	\[ e_{\sigma t} = \sum_{s \in\SYT(\nu)} c_{s} e_{s}
 \]
	then
	\begin{equation}\label{eq:unique_decomp}
	    e_{t_Y}=e_{(\sigma t)_{Z}} = \sum_{s \in\SYT(\nu)} c_{ s} e_{s_{Z}}.
	\end{equation} 
Suppose that 
\[0=\sum_{\substack{Z\in I\\ s \in\SYT(\nu)}} d_{ s,Z} e_{s_{Z}}=\sum_{Z\in I}\sum_{s \in\SYT(\nu)}d_{s,Z}e_{s_Z}.\] If $Z\ne Z^\prime$ (so not in the same orbit) and $s,t\in\SYT(\nu)$ arbitrary  
then the ``tabloids'' occurring in $e_{s_Z}$ and $e_{t_{Z^\prime}}$ are distinct, so we must have \[
0=\sum_{s \in\SYT(\nu)}d_{s,Z}e_{s_Z},\,\forall Z\in I.
\] But $B_{\nu,Z}$ is a basis whence 
$0=d_{s,Z}$ for every $s\in\SYT(\nu)$ and $Z\in I$. 

Therefore $B_{\nu}^a = \{e_{s_Z} \mid Z \in I, s \in\SYT(\nu)\}$ is indeed a basis of $V_0$, $\dim V_0=|B_{\nu}^a|=|I||\SYT(\nu)|$ and thus \eqref{eq:unique_decomp} is a unique expression so the last part of the Lemma also holds.
\end{proof}


Let $1<k<ab$. We describe a decomposition of the restriction of the generalized Foulkes module to a maximal intransitive subgroup $
{S_k \times S_{n -k}}$. 
As above, $\Omega_k$ is the set of those partitions of $k$ that are subpartitions of $(a^b)$.
In the following notation the dependence on $a$ is generally suppressed.
\begin{definition}\label{def:component}
For $\lambda\in \Omega_k$ let\[P_{\lambda} =P_\lambda^a= \{ X\in H_b^a\mid\lambda\text{ is the partition type of } \{1,2,\ldots,k\} \cap X\}\] and let the \emph{$\lambda$-component}, $V_{\nu, a}^{\lambda}$ be the $S_k \times S_{n-k}$-module generated by the $S_k\times S_{n-k}$-invariant set $\{e_{t_{X}} \mid X \in P_{\lambda} \}$ for any $t$ of shape $\nu$.
\end{definition}
Note that $X\in P_\lambda$, $\sigma\in S_b$ implies $\sigma X\in P_\lambda$, so $I\cap P_\lambda$ is a set of representatives of $S_b$-orbits of $H$ that lie in $P_\lambda$. The set $\{ e_{t_{X}} \mid X \in I\cap P_{\lambda},\, t\in\SYT(\nu)\}$ is a basis of $V_{\nu, a}^{\lambda}$. Indeed, the vector space they generate is $S_k\times S_{n-k}$-invariant and
\[\bigcup_{\lambda\in\Omega_k} \{ e_{t_{X}} \mid X \in I\cap P_{\lambda},\, t\in\SYT(\nu)\}=B_\nu^a.
\]
The mentioned decomposition is thus
\begin{equation} 
	F_{\nu}^a \downarrow_{S_k \times S_{n-k} } = \bigoplus_{\lambda \in \Omega_k} V_{\nu, a}^{\lambda}.
\end{equation}

Here comes an example with $a=3,\,b=k=4$ and $\nu = (2^2)$. 
We have $\Omega_4= \{(3,1),\, (2^2),\, (2,1^2),\, (1^4)\}$. Then $V=V_{(2^2), 3}^{(2, 1^2)}$ is generated by the set $\{ e_{t_{X}} \mid X \in P_{(2, 1^2)},\, t\in\SYT((2^2)) \}$. Let $t$ be as before and let \[X = (\{1,2,5\}, \{3,7,6\}, \{4,8,9\},\{10, 11, 12\}),\,X\cap \{1,2,3,4\}=(\{1,2\},\{3\},\{4\}).\]
 \begin{align*}
e_{t_X} =& \ytableausetup                     
{boxsize = 1.4em, tabloids, mathmode}           
\ytableaushort{       
	\{125\}\{367\}, \{489\}\{{10} {11} {12}\}
}
-
\ytableaushort{
	\{125\}\{{10} {11} {12}\}, \{489\}\{367\}
}\\\\
+ &
\ytableaushort{
	\{489\}\{{10} {11} {12}\}, \{125\}\{367\}
}
-
\ytableaushort{
	\{489\}\{367\}, \{125\}\{{10} {11} {12}\}
}
 \end{align*}
For $g = (1 4)(5 6 7)\in S_4\times S_{12}$
\begin{align*}
g e_{t_X} =& \ytableausetup                     
{boxsize = 1.4em, tabloids, mathmode}           
\ytableaushort{       
	\{246\}\{357\}, \{189\}\{{10} {11} {12}\}
}
-
\ytableaushort{
	\{246\}\{{10} {11} {12}\}, \{189\}\{357\}
}\\\\
+ &
\ytableaushort{
	\{189\}\{{10} {11} {12}\}, \{246\}\{357\}
}
-
\ytableaushort{
	\{189\}\{357\}, \{246\}\{{10} {11} {12}\}
}
\end{align*}
\section{Properties of $U_{\nu, a}$}
Here we focus on $k=b$ and especially on $U_{\nu, a} = V_{\nu, a}^{(1^b)}$. Recall from the discussion after Definition~\ref{def:component} that $U_{\nu,a}$ has basis $\{ e_{t_{X}} \mid X \in I_b^a\cap P_{(1^b)},\, t\in\SYT(\nu)\}$. To prove  Theorem~\ref{thm:main} we first deal with the $a=2$ case and then connect it to the arbitrary $a>2$ case.
\begin{definition}\label{def:Ttau}
Let $t$ be a $\nu$-tableau and $\tau \in \Sym(\{b+1, \ldots, 2b\})$. We define the ordered partition
$T(\tau)=(\{1, \tau(b+1)\}, \{2, \tau(b+2)\}, \dots, \{b, \tau(2b)\})\in H_b^2\cap P_{(1^b)}^2$.

\end{definition}
It is clear that the
set $\{T(\tau) \mid \tau \in \Sym(\{b+1, \ldots, 2b\})\}$ is a full set of representatives of the $S_b$ orbits of $H_b^2\cap P_{(1^b)}^2$. So $\{e_{t_{T(\tau)}} \mid \tau \in \Sym(\{b+1, \ldots, 2b\}),\, t\in\SYT(\nu)\}$ is a basis of $U_{\nu,2}$.

\begin{lemma}\label{lem_arb:a}
	Let $a, b \in \mathbb{N}$, $\nu$ a partition of $b$. Then
	\begin{equation}\label{lem:arb:a}
		U_{\nu, a} \cong \Inf_{S_b \times S_b}^{S_b \times (S_{a-1} \wr S_b)} U_{\nu, 2} \uparrow^{S_b \times S_{n-b}}
	\end{equation}
\end{lemma}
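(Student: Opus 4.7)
The plan is to give intrinsic descriptions of both $U_{\nu,a}$ and $U_{\nu,2}$ as modules induced from a point stabilizer, and then match the two descriptions using the standard commutativity of inflation and induction through a normal subgroup.

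I would begin by fixing a canonical basepoint: choose disjoint $(a-1)$-subsets $G_1,\ldots,G_b$ partitioning $\{b+1,\ldots,n\}$ and let $X_0\in P_{(1^b)}$ have $i$-th part $\{i\}\cup G_i$. The set $P_{(1^b)}$ is a single $(S_b\times S_{n-b})$-orbit, so the main step is identifying the stabilizer $D$ of $X_0$ (modulo the $S_b$-action defining $I^{(a)^b}$). A direct check shows that $(g,h)\in S_b\times S_{n-b}$ stabilizes $X_0$ precisely when $h\in S_{a-1}\wr S_b$ preserves the unordered partition $\{G_1,\ldots,G_b\}$ and $g$ equals the block permutation $\pi(h)\in S_b$ induced by $h$. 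Hence
\[
D=\{(\pi(h),h)\colon h\in S_{a-1}\wr S_b\}\le S_b\times(S_{a-1}\wr S_b),
\]
the diagonal copy of $S_{a-1}\wr S_b$.

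By Definition~\ref{def:component} and orbit-stabilizer, $U_{\nu,a}$ is induced from the basepoint submodule $W=\langle e_{t_{X_0}}:t\text{ standard}\rangle$ along $D\le S_b\times S_{n-b}$. Applying Lemma~\ref{lem:arbpoly} to re-expand $e_{t_{gX_0}}$ in the basis $\{e_{s_{X_0}}\}$ shows that $(g,h)\in D$ acts on $W$ through its quotient $(g,h)\mapsto g=\pi(h)$, matching the $S_b$-action on $S^\nu$. This yields
\[
U_{\nu,a}\;\cong\;\bigl(\Inf_{S_b}^{D}S^\nu\bigr)\uparrow_D^{S_b\times S_{n-b}},
\]
and specialising to $a=2$ gives $D\cong\Delta S_b\le S_b\times S_b$ and $U_{\nu,2}\cong S^\nu\uparrow_{\Delta S_b}^{S_b\times S_b}$.

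To combine the two descriptions, let $q:S_b\times(S_{a-1}\wr S_b)\twoheadrightarrow S_b\times S_b$ be the quotient by the normal subgroup $\{e\}\times S_{a-1}^b$; then $q^{-1}(\Delta S_b)=D$ and $q|_D$ is the wreath-product projection. The standard identity
\[
\Inf_{S_b\times S_b}^{S_b\times(S_{a-1}\wr S_b)}\bigl(S^\nu\uparrow_{\Delta S_b}^{S_b\times S_b}\bigr)
\;\cong\;\bigl(\Inf_{\Delta S_b}^{D}S^\nu\bigr)\uparrow_D^{S_b\times(S_{a-1}\wr S_b)},
\]
followed by transitivity of induction along $D\le S_b\times(S_{a-1}\wr S_b)\le S_b\times S_{n-b}$, matches the two sides of \eqref{lem:arb:a}. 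The only real obstacle is the stabilizer computation together with the verification that $D$ acts on $W$ through its projection to $S_b$; both are short, and the essential content is already packaged in Lemma~\ref{lem:arbpoly}.
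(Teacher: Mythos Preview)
Your argument is correct and arrives at the same factorisation through $S_b\times(S_{a-1}\wr S_b)$ that the paper uses, but the packaging is genuinely different. The paper builds an explicit intermediate $S_b\times(S_{a-1}\wr S_b)$-module $W$ (spanned by the $e_{t_{(\tau,\sigma)Z}}$ over a fixed $Z\in H^{(a-1)^b}$) and then checks by hand that $U_{\nu,a}=W\uparrow^{S_b\times S_{n-b}}$ and $W\cong\Inf\,U_{\nu,2}$. You instead recognise $U_{\nu,a}$ as imprimitive with block stabiliser $D=\{(\pi(h),h):h\in S_{a-1}\wr S_b\}$, obtain $U_{\nu,a}\cong(\Inf_{S_b}^{D}S^\nu)\uparrow_D^{S_b\times S_{n-b}}$ directly, specialise to $a=2$ to get $U_{\nu,2}\cong S^\nu\uparrow_{\Delta S_b}^{S_b\times S_b}$, and close with the standard Inf--Ind commutation across the normal subgroup $\{e\}\times S_{a-1}^{\,b}$ plus transitivity of induction. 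Your route is cleaner and yields the explicit description $U_{\nu,2}\cong S^\nu\uparrow_{\Delta S_b}^{S_b\times S_b}$ as a byproduct (something the paper only recovers implicitly through the character computation in the proof of Theorem~\ref{thm:main}); the paper's route stays entirely inside the concrete polytabloid bases and avoids naming the abstract commutation identity.
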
	
\begin{proof}
Fix a $\nu$-tableau $t$.
For $Y = \{Y_1,\ldots, Y_b\}  \in H_b^{a-1}$ (where the underlying set is $\{b+1,\,b+2,\ldots,n\}$) and $\sigma\in S_b$ let $Y_{\sigma}$ be the ordered partition 
$(\{\sigma(1)\}\cup Y_1, \dots,  \{\sigma(b)\}\cup Y_b)\in H_b^a$. 
Note that $Y_{\sigma}\in H_b^a$ and $\sigma Y\in H_b^{a-1}$ are different. As is Definition~\ref{def:Ttau}, for $\sigma=id$ put $T(Y)=Y_{id}$. 
Separating the smallest element of each part for $X\in P_{(1^b)}$ we see that 
$P_{(1^b)}=\{ Y_{\sigma} \mid Y \in H_b^{a-1}, \sigma \in S_b \}$. Similarly, $P_{(1^b)}\cap I_b^a=\{ T(Y) \mid Y \in H_b^{a-1}\}$. The module $U_{\nu, a}$ is the $S_b\times S_{n-b}$-module generated by the $S_b\times S_{n-b}$-invariant set $\{e_{t_X}  \mid X \in P_{(1^b)}\}=
\{ e_{t_{Y_{\sigma}}} \mid Y \in H_b^{a-1}, \sigma \in S_b \}$ for any $t$ of shape $\nu$. Its basis is $\{ e_{t_{T(Y)}} \mid Y \in H_b^{a-1}, t\in\SYT(\nu)\}$.

Fix $Y\in H_b^{a-1}$ and with it a wreath product $S_{a-1}\wr S_b\leq S_{n-b}$. As before, for $g\in S_{a-1}\wr S_b$ the image in $S_b\cong S_{a-1}\wr S_b/S_{a-1}^b$ is denoted by $\tau_g$. Now for any $\sigma\in S_b$, and $g\in S_{a-1} \wr S_b$  we get $(gY)_\sigma=(\tau_g Y)_\sigma
= (\{\sigma(1)\}\cup Y_{\tau_g(1)}, \ldots, \{\sigma(b)\}\cup Y_{\tau_g(b)})$. Denote by $W$ the $S_b \times (S_{a-1} \wr S_b)$-module  generated by the set $\{ge_{t_{Y_\sigma}}\mid \sigma\in S_b,\,g\in S_{a-1} \wr S_b\}=\{e_{t_{(\tau Y)_\sigma}} \mid  \tau,\sigma \in S_b\}$. Using the argument of Lemma~\ref{lem:polytabloids} we obtain that $W$ is the inflation of $U_{\nu,2}$ to $S_b\times S_{a-1}\wr S_b$ and its basis is $\{e_{t_{T(\tau Y)}} \mid \tau \in S_b,\, t \in\SYT(\nu)\}$.

The argument of Lemma~\ref{lem:arbpoly} now provides
\begin{equation}
  U_{\nu, a} \cong W \uparrow^{S_b \times S_{n-b}}, 
\end{equation}because the induction takes place only in the second component.
\end{proof}

Thus the study of $U_{\nu, 2}$ might give some interesting information on  the generalized Foulkes module.

Now we are ready to prove our main theorem.
\begin{proof}[Proof of Theorem~\ref{thm:main}]
Denote by $G_1=\Sym\{1,\ldots,b\} $ and $G_2=\Sym{\{b +1,\ldots, 2b\}}$, both isomorphic to $S_b$. 
Recall that a basis of $U=U_{\nu, 2}$ is 
$\{e_{t_{T(\tau)}} \mid \tau \in G_2,\,t\in\SYT(\nu)\}$.

We determine the values of the $G_1\times G_2$-character $\chi$ of $U$ which then helps us to identify the decomposition of $U$ into irreducible modules.

Let $\chi^{\nu}$ denote the irreducible character of the Specht module $S^{\nu}$. We claim that for $(g_1,g_2)\in G_1\times G_2$
\[
\chi(g_1,g_2)=\begin{cases}0,&\text{if } g_1,g_2\text{ are of different cycle structure;}\\
 |C_{S_b}(g_1)|\chi^\nu(g_1),&\text{if } g_1,g_2\text{ are of the same cycle structure.}\end{cases}
 \]

Since  the character value is the sum of the coefficients of basis element $e_{t_{T(\tau)}}$ in $(g_1, g_2)e_{t_{T(\tau)}}$ we need to compute them in order to prove the claim. 
Let $h=(1,b+1)(2,b+2)\cdots(b,2b)$ and $g_3=hg_1h$ be the shifted permutation to $\{b+1,\ldots ,2b\}$, that is $g_3$ sends $b + l$ to $b+k$ if and only if $g_1$ sends $l$ to $k$. In particular, $g_1$ and $g_3$ have the same cycle structure. Clearly, $(g_1, g_2) e_{t_{T(\tau)}} = e_{(g_1t)_{T(g_2 \tau g_3^{-1})}}= e_{s_{T(\varrho)}}$ where $s=g_1t$ and $\varrho = g_2 \tau g_3^{-1}\in G_2$. Note that $s=g_1t$ need not be standard so  $e_{s_{T(\varrho)}}$ might not be a basis element! 
However, if $g_1 e_t = e_s = \sum d_{r} e_{r}$ then
\begin{equation}
(g_1,g_2) e_{t_{T(\tau)}} = e_{s_{T(\varrho)}} = \sum d_{r} e_{r_{T(\varrho)}}.
\end{equation}
Therefore the coefficient of $e_{t_{T(\tau)}}$ in $e_{s_{T(\varrho)}}$ is $0$ unless $T(\tau) = T(\varrho)$. In the latter case $\tau=\rho$. 
Hence $g_2=\tau g_3\tau^{-1}$ and $g_1=hg_3h$ must be conjugate, so of the same cycle structure.

From now let $g_1$ and $g_2$ be fixed and of the same cycle structure. 
For computing the character value we need to find the number of permutations $\tau$ and standard tableau $t$ such that $e_{t_{T(\tau)}}$ is a linear summand of $(g_1,g_2) e_{t_{T(\tau)}} = e_{s_{T(\varrho)}}$. This can only happen if $e_t$ is a linear summand of $e_s = g_1 e_t$ and in that case $\varrho = g_2 \tau g_{3}^{-1} = \tau$, in other words
$\tau^{-1} g_2 \tau = g_{3}$. The number of such $\tau$ is equal to the order of the centralizer $|C_{S_b}(g_2)|=|C_{S_b}(g_1)|$ which proves the claim. 

Let $k_{g_1}=b!/|C_{S_b}(g_1)|$ denote the size of conjugacy class of $g_1$. Then the multiplicity $d_{\mu, \lambda}^{\nu}$ of the simple module $S^{\mu} \times S^{\lambda}$ in $U$ can be expressed as the inner product of characters of $S_b\times S_b$:
\begin{align*}
d_{\mu, \lambda}^{\nu}=\langle\chi^\mu\times\chi^\lambda,\,\chi\rangle = &\frac{1}{(b!)^2} \sum k_{g_{_1}}^2 \chi^{\mu}(g_1) \cdot \chi^{\lambda}(g_2)  \cdot |C_{S_b}(g_1)| \chi^{\nu}(g_1)\\
=&\frac{1}{(b!)^2} \sum k_{g_{_1}}^2 \chi^{\mu}(g_1) \cdot \chi^{\lambda}(g_1)  \cdot |C_{S_b}(g_1)| \chi^{\nu}(g_1) \\
=& \frac{1}{b!} \sum k_{g_{_1}} \chi^{\mu}(g_1) \cdot \chi^{\lambda}(g_1)  \cdot \chi^{\nu}(g_1)\\
=&c_{\mu,\lambda}^\nu, 
\end{align*}
the so called Kronecker coefficient, the multiplicity of $S^{\nu}$ in the $S_b$-module $S^{\mu} \otimes S^{\lambda}$.

Thus,
\begin{equation*}
		U \cong \bigoplus_{\mu,\lambda} c_{\mu, \lambda}^{\nu} S^{\mu} \times S^{\lambda},
\end{equation*}
which is equation \eqref{eq:U_2}
of Theorem~\ref{thm:main}.
Equation \eqref{eq:U_a} of Theorem~\ref{thm:main} follows now from Lemma~\ref{lem_arb:a}.
\end{proof}
Corollaries~\ref{cor:Foulkes} and \ref{cor:gen_Foulkes} follow from the observations
\begin{equation*}
	c_{\lambda, \lambda}^{(b)} = c_{\lambda^{\perp}, \lambda}^{(1^b)} = 1
\end{equation*}
and
\begin{equation*}
	b!=\sum_{\lambda \vdash b} \deg(S^{\lambda} \otimes S^{\lambda}) = \sum_{\lambda \vdash b}\deg(S^{\lambda^{\perp}} \otimes S^{\lambda})\leq  \deg( U_{(b), 2})= \deg( U_{(1^b), 2}) = b!.
\end{equation*}

\section{Consequences for the Generalized Foulkes Module}

\begin{lemma} \label{lem:tensorproducts}
The multiplicity of $S^{(1^b)} \times  S^{\mu}$ in $F_{\nu}^{a}$ is equal to the multiplicity of $S^{(1^b)} \times  S^{\mu}$ in $U_{\nu, a}$.
\end{lemma}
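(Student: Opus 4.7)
The plan is to prove the stronger statement that for every $\lambda\in\Omega_b$ with $\lambda\neq(1^b)$, the summand $V_{\nu,a}^\lambda$ in the decomposition~\eqref{eq:Omega-indexing} contains no constituent of the form $S^{(1^b)}\otimes S^\mu$. Granting this, the lemma follows at once, since only the $(1^b)$-summand of $F_\nu^a\downarrow_{S_b\times S_{n-b}}$, namely $U_{\nu,a}$, can contribute to the multiplicity in question.

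First I would realise $V_{\nu,a}^\lambda$ as an induced module. Fix $X_0\in P_\lambda$ and set $A_i=X_{0,i}\cap\{1,\ldots,b\}$ and $B_i=X_{0,i}\cap\{b+1,\ldots,n\}$; by construction the sizes of the $A_i$ realise $\lambda$. The group $S_b\times S_{n-b}$ acts transitively on $P_\lambda$, since any two set partitions with the prescribed intersection type can be matched block by block. Writing $H=\mathrm{Stab}_{S_b\times S_{n-b}}(X_0)$ and $M_{X_0}=\mathrm{span}\{e_{t_{X_0}}\mid t\text{ standard}\}$, regarded as an $H$-submodule, the basis description of Definition~\ref{def:component} identifies
\[ V_{\nu,a}^\lambda \cong M_{X_0}\uparrow_H^{S_b\times S_{n-b}}. \]
The $H$-action on $M_{X_0}$ is explicit: every $(g,h)\in H$ determines a unique $\phi_{(g,h)}\in S_b$ satisfying $gA_i\cup hB_i=A_{\phi(i)}\cup B_{\phi(i)}$, and Lemma~\ref{lem:arbpoly} identifies the action of $(g,h)$ on $M_{X_0}$ with the usual $S_b$-action of $\phi_{(g,h)}$ on $S^\nu$ under the correspondence $e_{t_{X_0}}\leftrightarrow e_t$.

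Now suppose $\lambda\neq(1^b)$, so some $A_k$ has at least two elements, say $i,j\in A_k$. The transposition $\tau=(i\ j)\in S_b\subseteq S_b\times S_{n-b}$ stabilises every $A_l$ as a set, hence lies in $H$ and has $\phi_{(\tau,\mathrm{id})}=\mathrm{id}$, so it acts as the identity on $M_{X_0}$. Frobenius reciprocity then yields
\[ \mathrm{Hom}_{S_b\times S_{n-b}}\bigl(V_{\nu,a}^\lambda,\,S^{(1^b)}\otimes S^\mu\bigr)\cong \mathrm{Hom}_H\bigl(M_{X_0},\,(S^{(1^b)}\otimes S^\mu)\downarrow_H\bigr), \]
and any $\varphi$ in the right-hand group must satisfy $\varphi(v)=\varphi(\tau v)=\tau\varphi(v)=-\varphi(v)$ for all $v\in M_{X_0}$, forcing $\varphi=0$. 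Thus $V_{\nu,a}^\lambda$ contains no copy of $S^{(1^b)}\otimes S^\mu$.

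The main obstacle I anticipate is the second step: verifying cleanly that $V_{\nu,a}^\lambda\cong M_{X_0}\uparrow_H^{S_b\times S_{n-b}}$, and, crucially, that the $H$-action on $M_{X_0}$ factors through $(g,h)\mapsto\phi_{(g,h)}$ with no hidden signs or twists arising from the polytabloid relations. Once this identification is granted the sign-mismatch argument is immediate. It is also reassuring that the method breaks down precisely when $\lambda=(1^b)$, where every $|A_i|=1$ and the projection of $H$ to $S_b$ contains no transposition, consistent with $U_{\nu,a}$ being the unique summand that can carry $S^{(1^b)}\otimes S^\mu$.
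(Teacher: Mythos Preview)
Your proposal is correct, and the core idea is the same as the paper's: for $\lambda\neq(1^b)$ there is a transposition $\tau=(i\ j)$ with $i,j\le b$ lying in a single block of the chosen set partition, and this $\tau$ acts trivially on the relevant basis vectors while acting by $-1$ on $S^{(1^b)}$. The difference is packaging. The paper argues directly: it picks a hypothetical nonzero element $m$ of a copy of $S^{(1^b)}\otimes S^\mu$, chooses a basis vector $e_{t_Y}$ with nonzero coefficient, observes $\tau e_{t_Y}=e_{t_Y}$, and uses Lemma~\ref{lem:arbpoly} to see that $\tau$ applied to any other basis vector cannot produce an $e_{t_Y}$-term, forcing $c_{t_Y}=-c_{t_Y}$. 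You instead first establish the induced module description $V_{\nu,a}^\lambda\cong M_{X_0}\uparrow_H^{S_b\times S_{n-b}}$ and then invoke Frobenius reciprocity. Your route is more structural and makes the obstruction transparent, but it requires the transitivity of $S_b\times S_{n-b}$ on $P_\lambda$ and the clean identification of the $H$-action on $M_{X_0}$ (both of which hold, essentially by the same Lemma~\ref{lem:arbpoly} that the paper uses at the coefficient level). The paper's argument avoids setting up the induced structure explicitly, at the cost of the slightly ad hoc coefficient chase; yours makes clearer why nothing can go wrong, once the induced description is in hand.
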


\begin{proof}
We need prove that $V_{\nu,a}^{\lambda}$ of \eqref{eq:Omega-indexing} has no summand $S^{(1^b)} \times S^{\mu}$ unless $\lambda=(1^b)$. 

As above, the basis of $V_{\nu, a}^{\lambda}$ is the set $B= B_{\nu, a}^{\lambda}=\{ e_{t_X} \mid X \in I\cap P_{\lambda},\ t \in\SYT(\nu) \}$. Observe that if $l_1$ and $l_2$ belong to the same part, say $X_k$, in $X = \{ X_1, X_2,\ldots,X_b\}$ then the transposition $(l_1\ l_2)$ fixes $e_{t_X}$, that is, $(l_1\, l_2)e_{t_X} = e_{t_X}$.

By contradiction, suppose that $M$ is a submodule of $V_{\nu, a}^{\lambda}$ isomorphic to $S^{(1^b)} \times S^{\mu}$. Let $m\ne0$ be an element of $M$. Then \[m = \sum_{\substack{X\in I\cap P_\lambda\\t\in \SYT(\nu)}}  c_{t_X}e_{t_X}.\]
Choose $t$ and $Y$ such that $c_{t_Y}\ne 0$. As $\lambda\ne (1^b)$, there exist $l_1, l_2 \leq b$ such that $l_1, l_2$ belong to the same part of $Y$. Let $e_{s_X}\in B\setminus\{e_{t_Y}\}$ be arbitrary. If $l_1, l_2$ belong to the same part of $X$ then $(l_1\, l_2)e_{s_X}=e_{s_X}\in B$. If $l_1,l_2$ are in different parts then $(l_1\, l_2) e_{s_X}=e_{s_{(l_1\, l_2)X}}=e_{s_Z}$ for some $Z\in P_\lambda$. However, this $Z$ and $Y$ cannot be in the same orbit. By 
Lemma~\ref{lem:arbpoly}, $e_{t_Y}=(l_1\,l_2)e_{t_Y}$ is not a summand of $(l_1\, l_2) e_{s_X}$ in either case. Therefore the coefficient of $e_{t_Y}$ in $(l_1\, l_2)m$ is also $c_{t_Y}$. However, $m\in M\cong S^{(1^b)} \times S^{\mu}$, so $(l_1\, l_2)m = -m$, which implies that the coefficient of $e_{t_Y}$ is $-c_{t_Y}\ne c_{t_Y}$, a contradiction. 
\end{proof}
\begin{corollary} \label{cor:Foulkestensor}
    The multiplicity of $S^{(1^b)} \times S^{\mu}$ in $F_\nu^a \downarrow_{S_b \times S_{ab -b}}$ is the same as the multiplicity of $S^\mu$ in $F_{\nu^\perp}^{a-1}$.
\end{corollary}
\begin{proof}
By Lemma~\ref{lem:tensorproducts}, the multiplicity of $S^{(1^b)} \times S^{\mu}$ in $F_\nu^{(a)} \downarrow_{S_b \times S_{ab-b}}$ is equal to its multiplicity in $U_{\nu, a}$. 

Let $c_{\mu, \lambda}^{\nu}$ be the Kronecker coefficient of $S^\nu$ in $S^\mu \otimes S^\lambda$. By Theorem~\ref{thm:main}, 
\begin{equation*}
    U_{\nu, a} = \bigoplus_{\mu,\lambda\vdash b} c_{\mu, \lambda}^{\nu} S^\mu \times F_\lambda^{a-1}.
\end{equation*}
Moreover $c_{\mu, \lambda}^{\nu} = c_{\lambda, \nu}^{\mu} = c_{\mu, \nu}^{\lambda}$. Therefore $c_{(1^b), \mu}^{\nu} = 0$ unless $\mu = \nu^\perp$, in which case $c_{(1^b), \nu^\perp}^{\nu} = 1$. 

From the above discussion we get that $S^{(1^b)} \times S^{\mu}$ can only be embedded in $S^{(1^b)} \times F_{\nu^\perp}^{a-1}$ among the above summands of $U_{\nu, a}$.   
\end{proof}
For a partition $\mu \vdash n$ of length $k$ and the Young subgroup  $S_{\mu}=S_{\mu_1} \times S_{\mu_2} ..\times S_{\mu_k}$, we define $M^{\mu}$ as the permutation module $1_{S_{\mu}} \uparrow^{S_n}$. In particular, $M^{(1^n)}$ is the regular module and $M^{(n)}$ is the trivial module. The following lemma is specific for the regular module.  
\begin{lemma}\label{lem:tabloids}
	Let $a, b \in \mathbb{N}$. Then
	\[\Inf_{S_b}^{S_a \wr S_b} M^{(1^b)} \uparrow^{S_{ab}} \cong M^{(a^b)}.\]
\end{lemma}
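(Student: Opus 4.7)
The plan is to recognize both sides as permutation modules on the same set of cosets, using transitivity of induction.

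First I would rewrite $M^{(1^b)}$ as an induced module. Since $(1^b)$ corresponds to the trivial Young subgroup $\{e\}\le S_b$, we have $M^{(1^b)}=1_{\{e\}}\uparrow^{S_b}$, the regular module. Its inflation to $S_a\wr S_b$ has $S_a^b$ (the base group, which is the kernel of the projection $S_a\wr S_b\twoheadrightarrow S_b$) acting trivially. A standard fact about inflation of permutation modules then gives
\[
\Inf_{S_b}^{S_a\wr S_b}M^{(1^b)} \;=\; \Inf_{S_b}^{S_a\wr S_b}\bigl(1_{\{e\}}\uparrow^{S_b}\bigr)\;=\;1_{S_a^b}\uparrow^{S_a\wr S_b},
\]
because cosets of $\{e\}$ in $S_b$ correspond bijectively (and $S_a\wr S_b$-equivariantly) to cosets of $S_a^b$ in $S_a\wr S_b$.

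Next I would apply transitivity of induction:
\[
\bigl(\Inf_{S_b}^{S_a\wr S_b}M^{(1^b)}\bigr)\uparrow^{S_{ab}} \;=\; \bigl(1_{S_a^b}\uparrow^{S_a\wr S_b}\bigr)\uparrow^{S_{ab}} \;=\; 1_{S_a^b}\uparrow^{S_{ab}}.
\]
The final identification is to observe that, inside $S_{ab}$, the base group $S_a^b$ of the wreath product is exactly the Young subgroup $S_{(a^b)}=S_a\times\cdots\times S_a$ (one factor acting on each of the $b$ blocks of size $a$ in the standard embedding $S_a\wr S_b\hookrightarrow S_{ab}$). By definition $1_{S_{(a^b)}}\uparrow^{S_{ab}}=M^{(a^b)}$, completing the identification.

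I expect the only subtle point to be the first step: making explicit that inflation of a coset-permutation module along a surjection $\pi:G\twoheadrightarrow Q$ with kernel $K$ turns $1_{H}\uparrow^{Q}$ into $1_{\pi^{-1}(H)}\uparrow^{G}$. Applied here with $H=\{e\}$, this gives $\pi^{-1}(H)=K=S_a^b$. As a sanity check, the dimensions match: both sides have dimension $b!\cdot[S_{ab}:S_a\wr S_b]=(ab)!/(a!)^b$, which equals $[S_{ab}:S_{(a^b)}]=\dim M^{(a^b)}$. No further calculation should be needed.
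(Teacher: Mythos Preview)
Your argument is correct and is essentially the paper's proof phrased abstractly: both recognise each side as the permutation module on the $S_{ab}$-set of ordered set partitions of $\{1,\ldots,ab\}$ into $b$ blocks of size $a$ (equivalently, cosets of $S_a^{\,b}=S_{(a^b)}$ in $S_{ab}$). The paper carries this out by exhibiting a common basis of tabloids $\{r_X\}$, whereas you reach the same identification via the general fact $\Inf_Q^G(1_H\!\uparrow^Q)\cong 1_{\pi^{-1}(H)}\!\uparrow^G$ together with transitivity of induction.
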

\begin{proof}
A basis of	$M^{\mu}$ is the set $\{\{t\} \mid t \ \text{is a }\mu\text{-tableau.}\}$. Choose $X \in I$, then a basis of $ \Inf_{S_b}^{S_a \wr S_b} M^{(1^b)} $ is the set $\{\{t_X\} \mid t \ \text{is a }(1^b)\text{-tableau}\}$. Therefore a basis for $\Inf_{S_b}^{S_a \wr S_b} M^{(1^b)} \uparrow^{S_{ab}}$ is the set $\{\{t_X\} \mid t \ \text{is a $(1^b)$ tableau}, X \in I\}$ which is also a basis for $M^{(a^b)}$.
\end{proof}
Now we are ready to derive the following corollary.
 \begin{corollary}\label{cor:gen_foulkes_min}
 	Let $\mu^\prime = \mu + (1^b)$. The multiplicity of $S^{\mu^\prime}$ in $F_{\nu^{\perp}}^{a + 1}$ is the same as the multiplicity of $S^{\mu}$ in $F_{\nu}^{a}$.
 \end{corollary}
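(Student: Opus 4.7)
The plan is to compute the multiplicity of the external tensor $S^{(1^b)}\otimes S^\mu$ inside the restriction $F_{\nu^\perp}^{a+1}\downarrow_{S_b\times S_{ab}}$ in two independent ways and then equate the answers.

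The first computation chains together the Lemma just proved with Theorem~\ref{thm:main}. The Lemma confines every occurrence of $S^{(1^b)}\otimes S^\mu$ in the restriction to the $(1^b)$-summand $U_{\nu^\perp,a+1}$, and Theorem~\ref{thm:main} expands
\[
U_{\nu^\perp,a+1}=\sum_{\alpha,\beta}c_{\alpha,\beta}^{\nu^\perp}\,S^\alpha\otimes F_\beta^a.
\]
Since $S^{(1^b)}\otimes S^\beta\cong S^{\beta^\perp}$ as $S_b$-modules, the Kronecker coefficient simplifies to $c_{(1^b),\beta}^{\nu^\perp}=\langle S^{\nu^\perp},S^{\beta^\perp}\rangle=\delta_{\nu,\beta}$. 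Consequently, the multiplicity of $S^{(1^b)}\otimes S^\mu$ inside $F_{\nu^\perp}^{a+1}\downarrow_{S_b\times S_{ab}}$ is exactly the multiplicity of $S^\mu$ in $F_\nu^a$.

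The second computation applies Frobenius reciprocity together with Pieri's rule: the same multiplicity also equals
\[
\sum_{\lambda}\bigl\langle S^\lambda,\,F_{\nu^\perp}^{a+1}\bigr\rangle,
\]
summed over partitions $\lambda\vdash(a+1)b$ for which $\lambda/\mu$ is a vertical strip of size $b$. The main obstacle is to collapse this sum to a single term. For this I would invoke Lemma~\ref{lem:tabloids} in the summed form $\sum_\pi\dim(S^\pi)F_\pi^{a+1}=M^{((a+1)^b)}$, which realises $F_{\nu^\perp}^{a+1}$ as a direct summand of $M^{((a+1)^b)}$; by Young's rule every Specht constituent $S^\lambda$ of $F_{\nu^\perp}^{a+1}$ then satisfies $\ell(\lambda)\leq b$. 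Since the same argument applied to $F_\nu^a\subseteq M^{(a^b)}$ forces $\ell(\mu)\leq b$, the only way to extend $\mu$ by a vertical $b$-strip while keeping the length within $b$ is to add one box to each of rows $1,\dots,b$, producing the unique candidate $\lambda=\mu+(1^b)$. Matching the two computations then yields the stated identity.
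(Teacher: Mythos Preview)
Your argument is correct and is precisely the approach the paper takes: its one-line proof invokes exactly the ingredients you spell out (Theorem~\ref{thm:main}, the containment $F_\nu^a\subseteq M^{(a^b)}$ from Lemma~\ref{lem:tabloids}, and the Littlewood--Richardson/Pieri rule), together with the preceding Lemma, which it uses implicitly to confine the relevant multiplicity to $U_{\nu^\perp,a+1}$. Note only that your Pieri step produces $\mu'=\mu+(1^b)$ rather than $\mu^\perp+(1^b)$; this matches de~Boeck's result quoted in the introduction, so the $\perp$ in the corollary's statement appears to be a typographical slip in the paper, not a flaw in your reasoning.
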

\begin{proof}
From Corollary~\ref{cor:Foulkestensor}, we know that the multiplicity of $S^{(1^b)} \times S^{\mu}$ in $F_\nu^{a+1} \downarrow_{S_b \times S_{ab}}$ is the same as the multiplicity of $S^\mu$ in $F_{\nu^\perp}^{a}$. 
We also know that $S^{\nu}$ embeds into the regular module 
 $M^{(1^b)}$. Therefore 
	\begin{equation*}
		\Inf_{S_b}^{S_a \wr S_b} S^{\nu} \text{ embeds into }
  \Inf_{S_b}^{S_a \wr S_b} M^{(1^b)}.
	\end{equation*}
By the definition of the generalized Foulkes module and by Lemma~\ref{lem:tabloids},
	\begin{equation} \label{eq:Foulkes_in_M}
	 	F_{\nu}^a =\Inf_{S_b}^{S_a \wr S_b} S^{\nu} \uparrow^{S_{ab}}\text{ embeds into }
   \Inf_{S_b}^{S_a \wr S_b} M^{(1^b)} \uparrow^{S_{ab}}\cong M^{(a^b)}.
	\end{equation}

Since for all simple constituents $S^\lambda$ of $M^{(a^b)}$, $\lambda$ has at most $b$ parts, this also holds for $F_{\nu}^a$. 
By the Littlewood-Richardson Principle, \cite[Theorem 16.4]{GJamesRTSG}, all the constituents of $S^{(1^b)} \times S^{\mu} \uparrow^{S_{(a+1)b}}$ have more than $b$ parts except for $S^{\mu + (1^b)}$ which occurs with multiplicity $1$. 

\end{proof}
A generalized form of Corollary~\ref{cor:gen_foulkes_min} has been proven recently by de Boeck, Paget and Wildon \cite{RMM}. Their technique is much different.

\section{Concluding remarks and questions}
There have been many advances in the study of the Generalized Foulkes Module. One such result is given by Paget and Wildon\cite{RpMw}. They give a description of minimal and maximal Specht modules with respect to the dominance order on partitions.

Another result related to the Kronecker coefficients is given by Ikenmeyer, Mulmuley and Walter \cite{IMW}. They proved that deciding whether a Kronecker coefficient is zero is an $NP$-hard problem. B\"{u}rgisser and Ikenmeyer \cite{BI} showed that computing Kronecker coefficients is $\#P$-hard. Theorem~\ref{thm:main} gives a weak relationship between Kronecker coefficients and Foulkes modules which makes us wonder whether similar statements can be said for the multiplicity of $S^{\lambda}$ in $F_{\nu}^a$.
\begin{question}
What is the computational complexity of the coefficient of $S^\lambda$ in $F_\nu^a$?
\end{question}
There are many approaches to study the properties of $F_{\nu}^a$ but the problem of its decomposition into Specht modules still remains widely open. Studying the properties of the Generalized Foulkes module restricted to some small and large subgroups $G\leq S_n$, $F_{\nu}^a \downarrow_{G}$, might give some interesting information on the properties of $F_{\nu}^a$ and in turn help us understand its decomposition.

The generalized Foulkes module $F_{\nu}^a$ can be further generalized using a parameter $\lambda \vdash a$ to obtain $F_{\nu}^{\lambda}$. A description of such a generalization is given in \cite{RpMw}.
\begin{question}
Find the analogue of \eqref{eq:Omega-indexing} for $F_{\nu}^{\lambda} \downarrow_{S_k \times S_{n-k}}$.
Ideally, there should exist a distinguished component of $F_{\nu}^{\lambda} \downarrow_{S_k \times S_{n-k}}$ with description similar to the one in Theorem~\ref{thm:main}.
\end{question}

In fact, the decomposition \eqref{eq:Omega-indexing} of the restricted generalised Foulkes module  is the Mackey decomposition, see \cite[Lemma~8.7]{JLAlperin}. For more on this --- for the classical Foulkes module --- we refer to Definition~2.9 in \cite{GE}, the proof of Theorem~6.3 and the discussion before Lemma~6.8 in \cite{AHR}.
\section{Acknowledgement}
The present work was part of the PhD thesis of Sai Praveen Madireddi at Central European University. He would like to acknowledge the support he received.\\
Here we would also like to acknowledge the useful comments of Erzsébet Horváth to whose memory we dedicate this article.\\
P\'{a}l Heged\"{u}s was partially supported by Hungarian National Research, Development
and Innovation Office (NKFIH), Grant No.~138596. The project leading to
this application has received funding from the European Research Council
(ERC) under the European Union's Horizon 2020 research and innovation
programme, Grant agreement No.~741420
   
\bibliographystyle{is-unsrt}
\bibliography{main}

\begin{thebibliography}{10}
\ifx \showCODEN  \undefined \def \showCODEN #1{CODEN #1}  \fi
\ifx \showISBN   \undefined \def \showISBN  #1{ISBN #1}   \fi
\ifx \showISSN   \undefined \def \showISSN  #1{ISSN #1}   \fi
\ifx \showLCCN   \undefined \def \showLCCN  #1{LCCN #1}   \fi
\ifx \showPRICE  \undefined \def \showPRICE #1{#1}        \fi
\ifx \showURL    \undefined \def \showURL {URL }          \fi
\ifx \path       \undefined \input path.sty               \fi
\ifx \ifshowURL \undefined
     \newif \ifshowURL
     \showURLtrue
\fi

\bibitem{TRM}
{ R. M. Thrall}.
\newblock {On symmetrized Kronecker powers and the structure of the free Lie
  ring.}
\newblock {\em Amer. J. Math.}, 64:\penalty0 377--388, 1942.

\bibitem{HOF}
{H.O. Foulkes}.
\newblock {Concomitants of the Quintic and Sextic Up To Degree Four in the
  Coefficients of the Ground Form}.
\newblock {\em Journal of the London Mathematical Society}, s1-25:\penalty0
  205--209, 1950.

\bibitem{SCD}
{S.C. Dent}.
\newblock {On a Conjecture of Foulkes}.
\newblock {\em Journal of Algebra}, 226:\penalty0 236 --249, 2000.

\bibitem{TM}
{T. McKay}.
\newblock {On Plethysm conjectures of Stanley and Foulkes}.
\newblock {\em Journal of Algebra}, 319:\penalty0 2050--2071, 2008.

\bibitem{MN}
{ J. Müller and M. Neunhöffer}.
\newblock { Some computations regarding Foulkes’ Conjecture}.
\newblock {\em Experiment. Math.}, 14:\penalty0 277--283, 2005.

\bibitem{CIM}
{M.W. Cheung, C. Ikenmeyer, S. Mkrtchyan}.
\newblock {Symmetrizing tableaux and the 5th case of the Foulkes conjecture}.
\newblock {\em Journal of Symbolic Computation}, 80:\penalty0 833 -- 843, 2017.

\bibitem{DBM}
{ M. de Boeck}.
\newblock {On the structure of Foulkes modules for the symmetric group.}
\newblock {\em Doctor of Philosophy(PHD) Thesis, University of Kent}, 2015.

\bibitem{GJamesRTSG}
{G.D. James}.
\newblock {\em {Representation Theory of The Symmetry Groups}}.
\newblock Springer-Verlag, 1978.

\bibitem{JLAlperin}
{J.L. Alperin}.
\newblock {\em {Local Representation Theory}}.
\newblock Cambridge University Press, 1986.

\bibitem{RMM}
{M. de Boeck and R. Paget and M. Wildon}.
\newblock {Plethysms of symmetric functions and highest weight
  representations}.
\newblock {\em Transactions of the American Mathematical Society}, 2021.

\bibitem{RpMw}
{R. Paget and M. Wildon}.
\newblock {Generalized Foulkes modules and maximal and minimal constituents of
  plethysms of Schur functions}.
\newblock {\em Proc. London Math.}, 118:\penalty0 1153 --1187, 2019.

\bibitem{IMW}
{C. Ikenmeyer, K. D. Mulmuley and M. Walter}.
\newblock {On vanishing of Kronecker coefficients}.
\newblock {\em Computational Complecity}, 26:\penalty0 949--992, 2017.

\bibitem{BI}
{P. B\"{u}rgisser and C. Ikenmeyer }.
\newblock {The complexity of computing Kronecker coefficients}.
\newblock {\em FPSAC 2008}, pages 357 -- 368, 2008.

\bibitem{GE}
{E. Giannelli}.
\newblock {On the decomposition of the Foulkes module}.
\newblock {\em Archiv der Mathematik}, 100\penalty0 (3):\penalty0 201--214,
  2013.

\bibitem{AHR}
{ R. M. Adin and P. Heged\"us and Y. Roichman }.
\newblock {Higher {Lie} characters and cyclic descent extension on conjugacy
  classes}.
\newblock {\em Algebraic Combinatorics}, 6\penalty0 (6):\penalty0 1557--1591,
  2023.

\end{thebibliography}
  
\end{document}